\begin{document}
\newtheorem{definition}{Definition}[section]
\newtheorem{theorem}[definition]{Theorem}
\newtheorem{lemma}[definition]{Lemma}
\newtheorem{proposition}[definition]{Proposition}
\newtheorem{examples}[definition]{Examples}
\newtheorem{corollary}[definition]{Corollary}
\def\square{\Box}
\newtheorem{remark}[definition]{Remark}
\newtheorem{remarks}[definition]{Remarks}
\newtheorem{exercise}[definition]{Exercise}
\newtheorem{example}[definition]{Example}
\newtheorem{observation}[definition]{Observation}
\newtheorem{observations}[definition]{Observations}
\newtheorem{algorithm}[definition]{Algorithm}
\newtheorem{criterion}[definition]{Criterion}
\newtheorem{algcrit}[definition]{Algorithm and criterion}

\newenvironment{prf}[1]{\trivlist
\item[\hskip \labelsep{\it
#1.\hspace*{.3em}}]}{~\hspace{\fill}~$\square$\endtrivlist}
\newenvironment{proof}{\begin{prf}{Proof}}{\end{prf}}

\title{Real and p-adic Picard--Vessiot fields}
\author{Teresa Crespo, Zbigniew Hajto and Marius van der Put }
\date{}
\maketitle

\begin{abstract} We consider  differential modules over  real and p-adic  differential fields $K$ such that its field of constants $k$ is real closed (
resp., p-adically closed).  Using P.~Deligne's work on Tannakian categories and a result of J.-P.~Serre on Galois cohomology, a purely algebraic proof of the existence and unicity of  real (resp., p-adic) Picard-Vessiot fields is obtained. The inverse problem for real forms of a semi-simple  group is treated. Some examples illustrate the relations between differential modules, Picard--Vessiot fields and real forms of a linear algebraic  group. 
\footnote{MSC2000: 34M50, 12D15, 11E10, 11R34. Keywords: differential Galois theory, real fields, p-adic fields}     \end{abstract}

\section{Introduction}

 We are grateful to J.-P.~Serre for providing us with a proof of the following statement on Galois cohomology.
\begin{theorem} Let $k\subset K$ denote fields of characteristic zero such that:\\
{\rm (i)}. For every smooth variety $V$ of finite type over $k$, one has that $V(K)\neq \emptyset $ implies $V(k)\neq \emptyset$.
{\rm (ii)}. The natural map $Gal(\overline{K}/K)\rightarrow Gal(\overline{k}/k)$ is bijective.\\
Let $G$ be any linear algebraic group over $k$, then the natural map\\
$H^1(k,G(\overline{k}))\rightarrow H^1(K,G(\overline{K}))$ is bijective.  
\end{theorem}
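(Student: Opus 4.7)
The plan is to embed $G$ as a closed subgroup of some $GL_n$ over $k$, translate the cohomological statement into one about cosets in the smooth quotient $V=GL_n/G$, and treat injectivity and surjectivity by separate geometric arguments that each hinge on hypothesis (i). By Hilbert 90 ($H^1(L,GL_n)=1$ for every field $L$), the cohomology sequence attached to $G\hookrightarrow GL_n\twoheadrightarrow V$ yields
\[
H^1(k,G(\overline k))\;=\;GL_n(k)\backslash V(k),\qquad H^1(K,G(\overline K))\;=\;GL_n(K)\backslash V(K),
\]
and the map of the theorem is induced by the natural inclusion $V(k)\hookrightarrow V(K)$. So the theorem becomes the statement that this inclusion descends to a bijection of orbit spaces.

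For injectivity, take $x,y\in V(k)$ with $gx=y$ for some $g\in GL_n(K)$. The transporter $\mathrm{Transp}(x,y)=\{h\in GL_n:hx=y\}$ is a closed $k$-subscheme of $GL_n$; where nonempty it is a left coset of the stabilizer $\mathrm{Stab}(x)$, itself a $k$-form of $G$ and hence smooth in characteristic zero. Thus $\mathrm{Transp}(x,y)$ is a smooth $k$-variety with a $K$-point, and hypothesis (i) supplies a $k$-point, placing $y$ in the $GL_n(k)$-orbit of $x$.

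For surjectivity, given $x\in V(K)$, use Hilbert 90 over $\overline K$ to write $x=a\cdot w_0$ with $a\in GL_n(\overline K)$ and $w_0=[G]\in V(k)$; the cocycle $\xi(\sigma)=a^{-1}\sigma(a)\in G(\overline K)$ represents $[x]\in H^1(K,G)$. Via (ii) view $\xi$ as a cocycle for $Gal(\overline k/k)$ with values in $G(\overline K)\supset G(\overline k)$. The goal is to find $y\in V(k)$ in the $GL_n(K)$-orbit of $x$, equivalently to adjust $\xi$ cohomologously within its $H^1(K,G(\overline K))$-class to a cocycle taking values in $G(\overline k)$. My approach is to build, from the smooth action morphism $GL_n\times V\to V$, $(g,y)\mapsto gy$, and the cocycle $\xi$, a twisted smooth $k$-variety $W$ whose $K$-points parametrize the desired pairs; hypothesis (ii), in the form $\overline K=\overline k\otimes_k K$, is what ensures that the twist descends to a $k$-variety rather than merely a $K$-variety, while the element $a$ manifestly exhibits a $K$-point of $W$. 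Applying (i) to $W$ then produces a $k$-point, which yields the required $y$.

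The principal obstacle is the precise setup and descent of $W$ in the surjectivity step: one must verify that the twist of the action morphism by the $G(\overline K)$-valued cocycle $\xi$ really does descend to a $k$-variety. This is where the two hypotheses cooperate most essentially, (ii) providing the descent from $\overline K$ to $\overline k$ via the linear disjointness of $K$ and $\overline k$ over $k$, and (i) then providing the descent from $K$ to $k$ at the level of rational points.
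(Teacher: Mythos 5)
Your reduction via Hilbert 90 to the orbit spaces $GL_n(k)\backslash V(k)$ and $GL_n(K)\backslash V(K)$ for $V=GL_n/G$ is correct, and your injectivity argument is complete and valid: the transporter is (where nonempty) a coset of the smooth stabilizer, hence a smooth $k$-variety of finite type with a $K$-point, and hypothesis (i) produces the required $k$-point. This is a clean geometric repackaging of the paper's injectivity argument, which instead takes a trivializing element $h\in G(\overline{K})$ for the cocycle, places it in $G(\overline{k}B)$ for a finitely generated $k$-subalgebra $B\subset K$, and specializes $B$ to $k$.

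The surjectivity step, however, has a genuine gap, and it is exactly where the content of the theorem lies. You propose to twist by the cocycle $\xi(\sigma)=a^{-1}\sigma(a)\in G(\overline{K})$ and to descend the resulting variety $W$ to $k$ using $\overline{K}=\overline{k}\otimes_k K$. But a cocycle with values in $G(\overline{K})$ only furnishes descent data relative to $\overline{K}/K$: the twist ${}_\xi V$ is canonically a $K$-variety, not a $k$-variety. To descend it further to $k$ you would need $\xi$ to be cohomologous, inside $G(\overline{K})$, to a cocycle with values in $G(\overline{k})$ --- which is precisely the surjectivity you are trying to prove, so the construction is circular; hypothesis (ii) identifies the two Galois groups but does not by itself move the values of the cocycle into $G(\overline{k})$. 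The paper's proof (due to Serre) shows why more is needed: one first kills the unipotent radical, then uses Springer's lemma to replace $G$ by the normalizer $N$ of a maximal torus, and then exploits the exact sequence $1\to C\to N\to F\to 1$ ($C$ a torus, $F$ finite) together with the key fact that $C(\overline{K})/C(\overline{k})$ is torsion free and divisible, hence cohomologically trivial, so that $H^n(k,C)\to H^n(K,C)$ is bijective for $n=1,2$; a coboundary argument into $H^2$ of a twisted torus and a diagram chase conclude. Note that in that argument surjectivity rests essentially on hypothesis (ii) and the structure theory of reductive groups, not on (i); your plan of obtaining it from a single application of (i) to a smooth twisted variety would require a construction you have not supplied and which, as sketched, cannot be carried out.
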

We will apply this result for the case that $k,K$ are both real closed fields
or both p-adically closed fields. For notational convenience a field $F$
is called {\it (formally) p-adic} if there is a valuation ring $O\subset F$ (with field of fractions $F$)  such that $pO$ is the maximal ideal of $O$ and $O/pO=\mathbb{F}_p$. The field $F$ is called {\it p-adically closed} 
if moreover no proper algebraic extension of $F$ is again a p-adic field.
What we call ``p-adic'' is called ``p-adic of rank one'' in
 \cite{PR}.  Higher rank p-adic fields can be treated in the same way.

We note that this theorem has also as a consequence that the classification of semi-simple Lie algebras over a real closed field (or a p-adically closed field) does not depend on the choice of that field.

In the sequel we will often write statements and proofs only for the real case and mention that the p-adic case is similar.\\

$K$ denotes a real differential field with field of constants $k$. We will always suppose that $k\neq K$ and
that $k$ is a real closed field. Let $M$ denote a differential module over $K$ of dimension $d$,
represented by a matrix differential equation $y'=Ay$ where $A$ is a $d\times d$-matrix with entries
in $K$. A {\it Picard--Vessiot field} $L$ for $M/K$ is a field extension of $K$ such that:\\
(a) $L$ is equipped with a differentiation extending the one of $K$,\\
(b)  $M$ has a full space of solutions over $L$, i.e., there exists an invertible $d\times d$-matrix $F$ (called
a fundamental matrix) with entries in $L$ satisfying $F'=AF$,\\
(c) $L$ is (as a field) generated over $K$ by the entries of $F$,\\
(d) the field of constants of $L$ is again $k$.\\

A {\it real Picard--Vessiot field} $L$ for $M/K$ is a
Picard--Vessiot field which is also a real field. For a p-adic differential
field $K$ we always suppose that its field of constants is p-adically closed.
The definition of a  {\it p-adic Picard--Vessiot field} is similar.
The main result of this paper is:
\begin{theorem} $K\supset k$ as above. Let $M/K$ be a differential module. \\
{\rm (1). Existence.}  There exists a real (resp., p-adic) Picard--Vessiot extension for $M/K$.\\
{\rm (2). Unicity for the real case}. Let $L_1,L_2$ denote two real Picard--Vessiot extensions for $M/K$. Suppose that $L_1$ and $L_2$
have total orderings which induce the same total ordering on $K$. Then there exists a $K$-linear isomorphism
$\phi : L_1\rightarrow L_2$ of differential fields.\\
{\rm (3). Unicity for the p-adic case}. Let $L_1,L_2$ denote two p-adic
Picard--Vessiot extensions for $M/K$. Suppose that $L_1$ and $L_2$
have $p$-adic closures $L_1^+$ and $L_2^+$ such that the 
$p$-adic valuations of $L_1^+$ and $L_2^+$ induce the same
$p$-adic valuation
on $K$ and such that   $K\cap (L_1^+)^n=K\cap (L_2^+)^n$ for every integer $n\geq 2$ (where $F^n:=\{f^n| f\in F\}$). Then there exists a $K$-linear isomorphism $\phi : L_1\rightarrow L_2$ of differential fields. \end{theorem}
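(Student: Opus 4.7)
The strategy is to apply Deligne's Tannakian formalism to the $k$-linear tensor category $T := \langle M \rangle^{\otimes}$ of subquotients of finite direct sums of tensor products $M^{\otimes a} \otimes (M^{*})^{\otimes b}$, in combination with Theorem 1.1. Since $\mathrm{End}_T(\mathbf 1) = k$ (the field of constants of $K$), $T$ is a $k$-linear Tannakian category, which becomes neutral after base change to $\bar k$.

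\emph{Existence (real case; $p$-adic similar).} Over $K' := K\bar{k}$ (with $\bar k = k(\sqrt{-1})$ in the real case), classical Picard--Vessiot theory yields an extension $\tilde L/K'$ with constants $\bar k$, whose differential Galois group $G$ is a linear algebraic group over $\bar k$; the action of $\mathrm{Gal}(\bar k/k)$ on $K'$ descends $G$ canonically to a $k$-form $G_0$. Real PV extensions of $M/K$ correspond to $k$-valued fiber functors on $T$, and once non-empty the set of these is a torsor under $H^1(k, G_0)$. Non-emptiness follows from the tautological $K$-valued fiber functor $\omega_K : N \mapsto N$ (the forgetful functor to $\mathrm{Vect}_K$): through Deligne's gerbe-of-fiber-functors formalism this gives a smooth affine $G_0$-torsor $V$ over $k$ with $V(K) \neq \emptyset$. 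Fixing a total ordering on $K$ extending the unique one on $k$ and passing to the real closure $K_r$ yields $V(K_r) \neq \emptyset$. Theorem 1.1, applied to the real-closed pair $(k, K_r)$, then gives $V(k) \neq \emptyset$, i.e.\ a real PV extension exists.

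\emph{Uniqueness (real case; $p$-adic similar).} Given two real PV extensions $L_1, L_2$, their complexifications $L_i(\sqrt{-1})$ are PV extensions of $K(\sqrt{-1})$ with constants $\bar k$, so by classical uniqueness there is a $K(\sqrt{-1})$-differential isomorphism $\phi : L_1(\sqrt{-1}) \to L_2(\sqrt{-1})$. The obstruction to modifying $\phi$ so that it commutes with the respective complex conjugations $\sigma_i$ (equivalently, to produce a $K$-isomorphism $L_1 \to L_2$) is the $1$-cocycle $c(\phi) := \sigma_2 \phi \sigma_1 \phi^{-1} \in G(\bar k)$, whose class lies in $H^1(k, G_0)$. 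The common ordering induced on $K$ by the orderings of the $L_i$ lets us embed both $L_i$ into a common real closed overfield of $K$, trivializing $c(\phi)$ there; Theorem 1.1 applied once more to $(k, K_r)$ returns triviality in $H^1(k, G_0)$, producing the required isomorphism. The $p$-adic case is identical in form, with $p$-adic closures replacing real closures; the hypothesis $K \cap (L_1^+)^n = K \cap (L_2^+)^n$ for every $n \geq 2$ is precisely what is needed to embed $L_1^+$ and $L_2^+$ into a common $p$-adic closure of $K$ (via Macintyre-style isomorphism criteria for $p$-adically closed fields). The main obstacle I anticipate is the Tannakian bookkeeping -- identifying the gerbe of fiber functors with an actual smooth affine $G_0$-torsor and verifying that the tautological and ordering- or valuation-induced fiber functors indeed yield $K$- or $K_r$-rational points on it -- along with, in the $p$-adic setting, translating the $n$-th power condition into the analogue of ordering compatibility that enables the common embedding.
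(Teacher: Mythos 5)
Your uniqueness argument follows essentially the same route as the paper: reduce to the case that $K$ is real (resp.\ p-adically) closed using the ordering hypothesis (resp.\ the $n$-th power hypothesis together with Prestel--Roquette), compare the two Picard--Vessiot fields after adjoining $\sqrt{-1}$, and kill the resulting descent cocycle by the injectivity of $H^1(k,G(\overline{k}))\rightarrow H^1(K,G(\overline{K}))$. Apart from loose phrasing (one does not literally embed $L_1$ and $L_2$ into a common real closed overfield; one shows both torsors become trivial over $K^r$), this part is sound and matches \S 2.2.

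The existence proof, however, has a genuine gap at its central step. You assert that the forgetful fibre functor $\rho:N\mapsto N$ ``gives a smooth affine $G_0$-torsor $V$ \emph{over $k$} with $V(K)\neq\emptyset$'' and then invoke only hypothesis (i) of Theorem 1.1 to get $V(k)\neq\emptyset$. But the scheme that $\rho$ actually produces, namely $\underline{Isom}^\otimes_K(K\otimes\omega_0,\rho)$ for a fibre functor $\omega_0$ over $k$, is a torsor over $K$, not over $k$: it represents a functor on $K$-algebras and has no a priori $k$-model. Finding a torsor defined over $k$ in its class is precisely the surjectivity of $H^1(k,G_0(\overline{k}))\rightarrow H^1(K,G_0(\overline{K}))$, i.e.\ the hard half of Theorem 1.1 (Serre's argument via the unipotent radical, a maximal torus and its normalizer, and Springer's lemma). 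So your argument presupposes the conclusion of Theorem 1.1 while claiming to use only its hypothesis. Two further points are elided: (a) the existence of \emph{some} fibre functor $\omega_0$ over $k$, needed even to set up the $H^1$-classification, is not automatic --- the paper proves it (Lemma 1.4) by spreading $M$ out over a finitely generated real $k$-subalgebra $R\subset K$ and specializing at a $k$-rational point, which exists by the real Nullstellensatz; and (b) $k$-valued fibre functors correspond to \emph{all} Picard--Vessiot fields with constants $k$, not to the real ones; the real ones are exactly those $\omega$ for which $\underline{Isom}^\otimes_K(K\otimes\omega,\rho)$ is trivial (Theorem 1.7, again via the real Nullstellensatz), and this characterization is what turns ``the trivial class lies in the image'' into ``a real Picard--Vessiot field exists''. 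Without (a), (b) and the surjectivity statement, the existence argument does not close.
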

$ \ $
\begin{remarks} {\rm $\ $ \\
(1).  It seems to be folklore that in the case the field of constants is $\mathbb{R}$, a Picard--Vessiot field (real or not) need not exist (compare \cite{C-S}, Remark 2.2). This is due to a mistaken interpretation of an example of A.~Seidenberg \cite{Sei}.   \\

\noindent 
(2). Consider part (2) of Theorem 1.2. Suppose that an isomorphism $\phi :L_1\rightarrow L_2$ exists.  Choose a total ordering of $L_1$ and define the total ordering of $L_2$ to be induced
by $\phi$. Then $L_1$ and $L_2$ induce the same total ordering on $K$. Therefore the condition in part (2)  of Thm. 1.2 is necessary.

If $K$ happens to be real closed, then the assumption in part (2) of Thm. 1.2 is superfluous since $K$ has a unique total ordering. On the other hand, consider the example $K=k(z)$ with differentiation $'=\frac{d}{dz}$ and the equation
$y'=\frac{1}{2z}y$. Let $L_1=K(t_1)$ with $t_1^2=z$ and $L_2=K(t_2)$ with $t_2^2=-z$. Both fields are real
Picard--Vessiot fields for this equation. They are not isomorphic as (differential) field extensions of $K$.  We observe  that 
$z$ is positive for any total ordering of $L_1$ and $z$ is negative for any total ordering of $L_2$.\\

\noindent 
(3). Consider part (3) of Theorem 1.2. As in part (2), the condition is necessary. It is superfluous if $K$ is p-adically closed.  Consider the equation $y'=\frac{1}{2z}y$ over the differential field $\mathbb{Q}_p(z)$
with $z'=1$. Then $L_j=K(t_j),\ j=1,2,3,4$ and 
$t_1^2=z,\ t_2^2=pz,\ t_3^2=az,\ t_4^2=paz$, where the image of $a\in \mathbb{Z}_p^*$ in $\mathbb{F}_p^*$ is not a square, are non isomorphic p-adic Picard-Vessiot fields.  Let $L_j^+$ denote a $p$-adic closure of $L_j$. It is possible to choose the $L_j^+$ such that the 
$p$-adic valuations induce the same $p$-adic valuation on $K$. However
the sets $K\cap (L_j^+)^2$ are  clearly distinct. }\hfill $\square$ \\ \end{remarks}

The proof of Theorem 1.2 uses Tannakian categories as presented in
\cite{D-M} and  P.~Deligne's fundamental paper \cite{De}. We adopt
much of the notation of \cite{De}. Let $<M>_\otimes $ denote the
Tannakian category generated by the differential module $M$. The
forgetful functor $\rho : <M>_\otimes \rightarrow vect(K)$
associates to a differential module $N\in <M>_\otimes $ the
finite dimensional $K$-vector space $N$. 

 \begin{lemma} There exists a fibre functor
 $\omega : <M>_\otimes \rightarrow vect(k)$.
 \end{lemma}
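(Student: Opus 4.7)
The plan is to build a fibre functor over $\bar k$ by classical differential Galois theory, and then descend it to $k$ via Theorem 1.1.

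First, since hypothesis (ii) of Theorem 1.1 gives $\bar K = K\bar k$, the derivation of $K$ extends uniquely to $\bar K$ with constant field $\bar k$, which is algebraically closed. Kolchin's Picard--Vessiot theorem therefore produces a PV extension $\tilde L /\bar K$ for $M\otimes_K\bar K$, and a fibre functor
\[
\omega_{\bar k} : <M>_\otimes \to vect(\bar k),\qquad N\mapsto \ker\bigl(\partial,\; N\otimes_K\tilde L\bigr).
\]
The group $G := \underline{Aut}^\otimes(\omega_{\bar k})$ is a linear algebraic group over $\bar k$; since $<M>_\otimes$ is Tannakian over $k$ (i.e.\ $End(\mathbf 1)=k$), Deligne's formalism \cite{De} equips $G$ with a $k$-form $G_0$ satisfying $G_0\otimes_k\bar k\simeq G$.

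Over $\bar K$, both $\omega_{\bar k}\otimes_{\bar k}\bar K$ and $\rho\otimes_K\bar K$ are fibre functors to $vect(\bar K)$; any two such are tensor-isomorphic (uniqueness over an algebraically closed field), so the $\bar K$-scheme $\underline{Isom}^\otimes(\omega_{\bar k}\otimes\bar K,\,\rho\otimes\bar K)$ is a trivial $G_{\bar K}$-torsor. Using the identification $Gal(\bar K/K)\simeq Gal(\bar k/k)$ from hypothesis (ii), the natural Galois-equivariance makes this scheme the base change of a $G_{0,K}$-torsor $\mathcal P$ over $K$, with class $[\mathcal P]\in H^1(K,G_0(\bar K))$. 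By Theorem 1.1 applied to $G_0$, the map $H^1(k,G_0(\bar k))\to H^1(K,G_0(\bar K))$ is bijective, so there is a $G_0$-torsor $\mathcal P_0$ over $k$ with $\mathcal P_0\otimes_k K\simeq\mathcal P$. The twist of $\omega_{\bar k}$ by $\mathcal P_0$ (the contracted product ${}_{\mathcal P_0}\omega_{\bar k}$) is then a tensor-exact functor $\omega : <M>_\otimes\to vect(k)$; a base-change check, using $\mathcal P_0\otimes_k K\simeq\mathcal P$, shows $\omega\otimes_k\bar K\simeq\rho\otimes_K\bar K$, confirming that $\omega$ is indeed a fibre functor with values in $vect(k)$.

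The main obstacle is the Tannakian descent underlying the second paragraph: (a) producing the $k$-form $G_0$ of the Tannakian group $G=\underline{Aut}^\otimes(\omega_{\bar k})$, and (b) verifying that the $\bar K$-torsor of tensor-isomorphisms really descends to a $G_{0,K}$-torsor over $K$ rather than merely being defined over $\bar K$. Both require combining Deligne's gerbe-theoretic formalism with Galois descent through the isomorphism $Gal(\bar K/K)\simeq Gal(\bar k/k)$. Once these descent facts are in hand, the appeal to Serre's Theorem 1.1 and the subsequent contracted-product twist are essentially formal.
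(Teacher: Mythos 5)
Your proposal has a genuine gap, and in fact the circularity sits exactly where you flag it as ``the main obstacle.'' The existence of a $k$-form $G_0$ of $G=\underline{Aut}^\otimes(\omega_{\bar k})$ and the descent of a fibre functor from $\bar k$ to $k$ are not formal consequences of Deligne's formalism: a Tannakian category over $k$ is a gerbe, and the existence of a fibre functor with values in $vect(k)$ is precisely the statement that this gerbe is neutral. The obstruction to neutrality lives in a (non-abelian) $H^2$ of the band, not in $H^1$. Your argument uses $H^1$ throughout, but Propositions 1.5 and 1.6 classify fibre functors \emph{relative to a given one over $k$}; without that base point you cannot identify the $\bar K$-torsor of tensor-isomorphisms with a class in $H^1(K,G_0(\bar K))$, nor is the ``contracted product'' of a $k$-torsor $\mathcal P_0$ with the $\bar k$-valued functor $\omega_{\bar k}$ a functor to $vect(k)$ --- that would require descent data $\sigma^*\omega_{\bar k}\simeq\omega_{\bar k}$ satisfying a cocycle condition, whose existence is the very thing to be proved. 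A second, independent problem: the lemma is stated for an arbitrary real differential field $K$ with real closed constant field $k$, whereas your appeal to hypothesis (ii) of Theorem 1.1 (so that $\bar K=K\bar k$ and $Gal(\bar K/K)\simeq Gal(\bar k/k)$) forces $K$ to be real closed; for $K=\mathbb{R}(z)$, say, the map $Gal(\bar K/K)\to Gal(\bar k/k)$ is far from bijective.

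The paper's proof avoids all of this and is much more elementary. One spreads $M$ out over a finitely generated differential $k$-subalgebra $R\subset K$ carrying a projective differential module $M_0$ with $K\otimes_R M_0\simeq M$, chosen so that $<M_0>_\otimes\to <M>_\otimes$ is an equivalence (Deligne, 6.20). Then reduction modulo a maximal ideal $\underline{m}$ of $R$ gives a fibre functor with values in $vect(R/\underline{m})$, and the whole point of the real (resp.\ p-adic) hypothesis is that $R$ is a real (resp.\ p-adic) finitely generated algebra over a real closed (resp.\ p-adically closed) field, so the real Nullstellensatz supplies a maximal ideal with $R/\underline{m}=k$. If you want to pursue a cohomological proof along your lines, you would need to work with the non-abelian $H^2$ of the band and show that neutrality over $K$ (via the forgetful functor $\rho$) forces neutrality over $k$; that is a genuinely harder route than the specialization argument.
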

\begin{proof} We follow the reasoning of \cite{De}, 6.20 Corollaire.
The field $K$ contains a finitely generated $k$-subalgebra $R$, which 
is invariant under differentiation, such that there exists a projective, finitely generated
$R$-module $M_0\subset M$, with the properties: $M_0$ is invariant under the operator
$\partial $ of $M$ and the canonical map $K\otimes _RM_0\rightarrow M$ is an isomorphism of differential modules.   Let $<M_0>_\otimes$ denote the Tannakian category of the projective differential modules over $R$, generated by $M_0$. Moreover, $R$ can be chosen
 such that the functor $K\otimes _R*:<M_0>_\otimes \rightarrow <M>_\otimes $ is an equivalence of Tannakian categories. Choose a
maximal ideal $\underline{m}$ of $R$.  Then $N\mapsto R/\underline{m}\otimes N$
is a fibre functor $<M_0>_\otimes \rightarrow vect(R/\underline{m})$ and induces a fibre
functor $<M>_\otimes \rightarrow vect(R/\underline{m})$.

 In our special case $K$ is a real field (resp., p-adic field) and therefore $R$ is a real 
(resp., p-adic) algebra, finitely generated over a real closed (resp., p-adically closed) field $k$. There exists  $\underline{m}$ such that $R/\underline{m}=k$ (see \cite{La, PR}).   \end{proof}

 {\it Now we recall some results of}  \cite{De}, \S 9. 
 The functor $\underline{Aut}^\otimes(\omega)$ is represented by a linear
 algebraic group $G$ over $k$.  By Proposition 9.3, the functor $\underline{Isom}^\otimes_K(K\otimes \omega ,\rho)$ is represented by a torsor $P$ over $G_K:=K\times _kG$.   This torsor is affine, irreducible and its coordinate ring $O(P)$ has a natural differentiation extending the differentiation of $K$. Moreover, the field of fractions $K(P)$ of $O(P)$ is  a Picard--Vessiot field for $M/K$ and $G$ identifies with the group
 of the $K$-linear differential automorphisms of $K(P)$.

 On the other hand, let $L$ be a Picard--Vessiot for $M/K$. Define the fibre functor
 $\omega _L: <M>_\otimes \rightarrow vect(k)$ by $\omega _L(N)=\ker (\partial : L\otimes _KN\rightarrow L\otimes_K N)$.
 Then $\omega _L$ produces a Picard--Vessiot field $L'$ which is isomorphic to $L$ as differential field extension
  of $K$. The conclusion is:\\

 \begin{proposition}[{\rm \cite{De},\S 9}]
 The above constructions yield a bijection between the (isomorphy classes of) fibre functors
 $\omega : <M>_\otimes \rightarrow vect(k)$ and the (isomorphy classes of) Picard--Vessiot fields $L$ for $M/K$.
  \end{proposition}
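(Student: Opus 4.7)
The plan is to exhibit the two assignments explicitly and verify that they are well-defined on isomorphism classes and mutually inverse up to tensor-natural isomorphism. First I would check well-definedness. A tensor-natural isomorphism $\omega_1 \cong \omega_2$ between two fibre functors induces an isomorphism of the associated torsors $P_1, P_2$ and hence an isomorphism $K(P_1) \cong K(P_2)$ of differential field extensions of $K$. Conversely, a $K$-linear differential isomorphism $\phi : L_1 \to L_2$ induces a natural isomorphism $\omega_{L_1} \cong \omega_{L_2}$ via $\phi \otimes \mathrm{id}$ on the kernels of $\partial$, using that $\phi$ preserves constants. For this to make $\omega_L$ land in $\mathrm{vect}(k)$, one needs $\dim_k \ker(\partial : L\otimes_K N \to L\otimes_K N) = \dim_K N$ for every $N \in \langle M\rangle_\otimes$, which follows from condition (d) in the definition of a Picard--Vessiot field by the standard Wronskian argument applied to $N$ and all of its tensor constructions (once the full solution space for $M$ exists in $L$, it exists for every object of $\langle M\rangle_\otimes$).

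Next I would verify the direction $\omega \mapsto K(P)$. Given $\omega$, by Proposition~9.3 of \cite{De} the functor $\underline{\mathrm{Isom}}^\otimes_K(K\otimes \omega, \rho)$ is represented by a $G_K$-torsor $P$; the coordinate ring $O(P)$ inherits a differentiation from $\rho$ and $K$, and $P$ is a $G_K$-torsor, hence geometrically reduced and (after passing to an irreducible component, which is all of $P$ if $G$ is connected, or to the appropriate connected component in general) we obtain the field of fractions $K(P)$. The tautological isomorphism $K(P) \otimes_k \omega(M) \cong K(P) \otimes_K M$ produces a fundamental matrix for $M/K$ over $K(P)$, whose entries generate $K(P)$ as a field over $K$; and the constants of $K(P)$ equal $k$ because $G_K$ acts transitively on $P$ over $K$ and the $G$-invariants of $O(P)$ are $K$. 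This gives conditions (a)--(d) of a Picard--Vessiot field.

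Finally I would check that the two assignments are mutually inverse. Starting with a fibre functor $\omega$ and forming $L = K(P)$, the tautological trivialization of $\rho$ over $L$ after applying $\omega$ identifies $\omega_L(N) = \ker(\partial : L\otimes_K N \to L\otimes_K N)$ with $\omega(N)$, functorially and compatibly with tensor products, giving $\omega_L \cong \omega$. Starting with a Picard--Vessiot field $L$, forming $\omega_L$, and then building the torsor $P'$ and its fraction field $L' = K(P')$, one checks that the inclusion $\omega_L(N) \hookrightarrow L \otimes_K N$ defines a $K$-point of $P'$ over $L$, which in turn yields a $K$-linear differential embedding $L' \hookrightarrow L$ that is surjective because $L$ is generated over $K$ by a fundamental matrix of $M$, whose entries are visibly in the image of $L'$.

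The only step that is not a purely formal translation of Deligne's Proposition~9.3 is the verification that $\omega_L$ is a well-defined fibre functor into $\mathrm{vect}(k)$, and this is precisely where condition (d) in the definition of a Picard--Vessiot field is essential. I expect this to be the main obstacle, since it must be checked uniformly on all objects of $\langle M\rangle_\otimes$, not just on $M$ itself; once this is granted, the rest of the proposition is formal Tannakian yoga and is treated in \cite{De}, \S 9.
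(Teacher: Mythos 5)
Your proposal takes essentially the same route as the paper, which does not actually prove this proposition but only recalls from \cite{De}, \S 9 the two constructions $\omega\mapsto K(P)$ and $L\mapsto\omega_L$ and cites the bijection; your sketch fills in the standard Tannakian verifications (well-definedness of $\omega_L$ via condition (d) and the Wronskian bound, the simple/differentially generated structure of $O(P)$, and the two composites) in the expected way. One small correction: the torsor $P$ is irreducible over $K$ even when $G$ is disconnected, because $O(P)$ is a differentially simple ring and hence a domain in characteristic zero, so there is no need --- and it would in fact be wrong --- to ``pass to the appropriate connected component''; similarly, the identity of the constants of $K(P)$ with $k$ comes from evaluating the extended fibre functor on the unit object of the ind-category (equivalently from differential simplicity of $O(P)$), not from $G$-invariance of $O(P)$.
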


The following result will also be useful.

\begin{proposition}[{\rm \cite{D-M}, Thm. 3.2}]  Let $\omega :<M>_\otimes \rightarrow vect(k)$ be a fibre functor and
$G=\underline{Aut}^\otimes_k(\omega)$.\\
\noindent {\rm (a)}. For any field $F\supset k$ and any fibre functor $\eta :  <M>_\otimes \rightarrow vect(F)$,
 the functor $\underline{Isom}^\otimes _F(F\otimes \omega ,\eta)$ is representable by a torsor over
$G_F=F\times _kG$.\\
{\rm (b)}. The map $\eta\mapsto  \underline{Isom}^\otimes _F(F\otimes \omega ,\eta)$ is a bijection between
the (isomorphy classes of) fibre functors  $\eta :  <M>_\otimes \rightarrow vect(F)$ and the (isomorphy classes of)
$G_F$-torsors. \end{proposition}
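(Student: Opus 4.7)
The plan is to work inside the Tannakian equivalence. The reference fibre functor $\omega$ identifies $<M>_\otimes$ with the category $Rep_k(G)$ of finite-dimensional $k$-representations of the algebraic group $G=\underline{Aut}^\otimes_k(\omega)$, so both statements become assertions about $Rep_k(G)$ and its $vect(F)$-valued fibre functors, the right framework in which the isomorphism $\underline{Aut}^\otimes_F(F\otimes \omega)=G_F$ is tautological.

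For part (a), I first record that $G_F$ acts on $\underline{Isom}^\otimes_F(F\otimes \omega,\eta)$ by precomposition, and that this action is free on the set level by the definition of $G$. Representability is then obtained as follows. Choose a faithful object $V\in <M>_\otimes$, i.e.\ a representation for which the associated map $G\to GL(\omega(V))$ is a closed embedding; such a $V$ exists because $G$ is of finite type. Since $V$ tensor-generates the whole category up to subquotients, any $\otimes$-iso\-morphism $F\otimes \omega \to \eta$ is determined by its component at $V$, so $\underline{Isom}^\otimes_F(F\otimes \omega,\eta)$ embeds as a closed subfunctor of the representable scheme $\underline{Isom}(F\otimes \omega(V),\eta(V))$, cut out by the finitely many tensor and morphism constraints that pin $G$ down inside $GL(\omega(V))$. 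To upgrade this scheme to a $G_F$-torsor I check that it is non-empty after an fpqc base change: passing to an algebraic closure reduces this to the standard Tannakian fact that all $vect(\overline{F})$-valued fibre functors of $<M>_\otimes\otimes_k\overline F$ are isomorphic. Freeness plus equality of dimensions then force transitivity, so a $G_F$-torsor is obtained.

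For part (b) I construct the quasi-inverse. Given a $G_F$-torsor $P=\mathrm{Spec}(B)$, I define $\eta_P:<M>_\otimes \to vect(F)$ by $V\mapsto (B\otimes_k V)^{G}$, taking invariants for the diagonal $G$-action and viewing the result as an $F$-module through $B$. This is a $k$-linear $\otimes$-functor, and it is exact and faithful because after an fpqc cover trivialising $P$ it becomes the functor $V\mapsto F\otimes_k V$; exactness and faithfulness then descend. Computing $\underline{Isom}^\otimes_F(F\otimes \omega,\eta_P)$ recovers $P$ by the very definition of $\eta_P$, while the reconstruction formula $\eta(V)=\bigl(\mathcal{O}(\underline{Isom}^\otimes_F(F\otimes \omega,\eta))\otimes_k V\bigr)^G$ shows that $\eta_{\underline{Isom}^\otimes_F(F\otimes \omega,\eta)}\cong \eta$. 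Hence the two assignments are mutually inverse on isomorphism classes.

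The step I expect to be the main obstacle is the representability in (a): one must confirm that finitely many tensor-compatibility equations suffice to cut out $\underline{Isom}^\otimes_F(F\otimes \omega,\eta)$ as a closed subscheme of $\underline{Isom}(F\otimes \omega(V),\eta(V))$. This ultimately rests on $G$ being of finite type, so that the tensor subcategory it generates admits a finite presentation relative to the faithful generator $V$. Once this finiteness is secured, everything else—torsor structure, descent for $\eta_P$, and the bijection in (b)—follows formally from the Tannakian machinery already assembled in \cite{De}, \S 9.
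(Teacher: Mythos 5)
The paper itself gives no proof of this proposition: it is quoted verbatim from Deligne--Milne, Thm.~3.2, and used as a black box, so there is no internal argument to compare yours against. What can be judged is whether your sketch is a sound reconstruction of the standard proof, and in outline it is: identifying $<M>_\otimes$ with $Rep_k(G)$ via $\omega$, representing $\underline{Isom}^\otimes_F(F\otimes\omega,\eta)$ as a closed subscheme of $\underline{Isom}(F\otimes\omega(V),\eta(V))$ for a tensor generator $V$, and recovering $\eta$ from a torsor $P=\mathrm{Spec}(B)$ by the twist $V\mapsto (B\otimes_k\omega(V))^G$ are exactly the standard moves, and the mutual-inverse check in (b) is routine once (a) is in place.

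The genuine gap is in your justification of the torsor property in (a). The real mathematical content of Deligne--Milne 3.2 is precisely that the affine scheme representing $\underline{Isom}^\otimes_F(F\otimes\omega,\eta)$ is \emph{nonempty}, i.e.\ that its coordinate ring is faithfully flat (here: nonzero) over $F$. In the standard development this is proved directly from exactness and faithfulness of $\eta$ via the coend algebra, and the statement ``all $vect(\overline{F})$-valued fibre functors are isomorphic'' is then deduced as a corollary (a nonempty affine scheme over $\overline{F}$ has an $\overline{F}$-point). By invoking that corollary to establish non-emptiness you have inverted the logical order, so as written the argument is circular unless you supply an independent proof of the fact you cite; otherwise your ``proof'' reduces to the same citation the paper makes, with extra steps. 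Separately, ``freeness plus equality of dimensions then force transitivity'' is not a valid argument at the level of schemes (it only controls points and ignores non-reducedness); the correct route is that a trivialization over $\overline{F}$ exhibits $P_{\overline{F}}\cong G_{\overline{F}}$ equivariantly and the torsor property descends. Neither issue affects part (b), which is fine modulo (a).
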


The final ingredient in the proof of Theorem 1.2 is:

   \begin{theorem} Suppose that $K$ is real closed (resp., p-adically closed). Let $L$ be a Picard--Vessiot field for $M/K$. Then $L$ is a real
field  (resp., a p-adic field) if and only if the torsor $\underline{Isom}_K^\otimes (K\otimes \omega _L,\rho)$ is trivial. \end{theorem}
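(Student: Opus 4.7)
The strategy is to show that triviality of the torsor $P := \underline{Isom}_K^\otimes(K\otimes \omega_L,\rho)$ is equivalent to $P(K)\neq\emptyset$, and then to recognize this geometric condition as the real (resp.\ $p$-adic) property of the function field $L = \operatorname{Frac}(O(P))$. Note that $P$ is smooth over $K$: the base change $P\times_K\bar K$ has a $\bar K$-point (any nonempty variety over an algebraically closed field does), so it is a trivial $G_{\bar K}$-torsor and isomorphic to the smooth scheme $G_{\bar K}$; smoothness then descends along $\bar K/K$.

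For the forward direction, suppose $L$ is real (resp.\ $p$-adic) and let $L^+$ be a real (resp.\ $p$-adic) closure of $L$, so that $K\subseteq L\subseteq L^+$ with $L^+$ real closed (resp.\ $p$-adically closed). The chain $O(P)\subseteq L\subseteq L^+$ exhibits an $L^+$-point of $P$, so the torsor $P\times_K L^+$ is trivial. Applying Theorem 1.1 to the pair $(K,L^+)$---both real closed (resp.\ $p$-adically closed), so the hypotheses (i) and (ii) are satisfied---the base change map $H^1(K,G(\bar K))\to H^1(L^+,G(\bar{L^+}))$ is bijective, forcing $[P]$ itself to vanish. Hence $P$ is trivial.

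For the backward direction, if $P$ is trivial then $P\cong G_K$ and in particular $P(K)\neq\emptyset$. Picking $p\in P(K)$, smoothness of $P$ at $p$ with residue field $K$ gives $\widehat{O}_{P,p}\cong K[[t_1,\dots,t_n]]$ with $n=\dim G$, by Cohen's structure theorem, and hence an embedding $L\hookrightarrow\operatorname{Frac}(K[[t_1,\dots,t_n]])$. In the real case, the target admits a lexicographic ordering extending that of $K$, via the iterated Laurent series embedding $K[[t_1,\dots,t_n]]\hookrightarrow K((t_1))\cdots((t_n))$ with each $t_i$ treated as a positive infinitesimal. In the $p$-adic case, it carries the composite valuation built by nesting the $t_i$-adic valuations with the $p$-adic valuation on $K$; this valuation has residue field $\mathbb{F}_p$ and contains $p$ in its maximal ideal, giving a $p$-adic field in the paper's sense. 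Either way, $L$ is a subfield of a real (resp.\ $p$-adic) field, hence real (resp.\ $p$-adic) itself.

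The forward direction reduces cleanly to Theorem 1.1 as a black box. The main technical delicacy is the $p$-adic backward direction: the composite valuation on $\operatorname{Frac}(K[[t_1,\dots,t_n]])$ has rank $n+1$ rather than $1$, so the argument relies on the paper's convention that accommodates higher rank $p$-adic fields, as flagged in the introduction.
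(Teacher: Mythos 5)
Your proof is correct, but both halves take a genuinely different route from the paper's, which is shorter on each count. For the forward direction the paper applies the real (resp.\ $p$-adic) Nullstellensatz directly to the coordinate ring $R=O(P)$: since $R\subset L$ is a finitely generated real $K$-algebra and $K$ is real closed, there is a $K$-algebra homomorphism $R\to K$, so the torsor has a $K$-point and is trivial. Your detour through a real (resp.\ $p$-adic) closure $L^+$ of $L$ and the injectivity half of Theorem 1.1 for the pair $K\subset L^+$ is legitimate and non-circular (that injectivity is established by the specialization argument at the end of \S 2.2, independently of Theorem 1.7), but verifying hypothesis (i) of Theorem 1.1 for $K\subset L^+$ is exactly the Artin--Lang/model-completeness input that the paper's one-line argument uses directly, so nothing is saved. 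For the backward direction the paper simply cites [PR], \S 1, Theorem 2; your completion-plus-iterated-Laurent-series argument amounts to a proof of that citation and is worth having spelled out, including the check that the composite valuation on $K((t_1))\cdots((t_n))$ still satisfies $pO=\mathfrak{m}$ and $O/pO=\mathbb{F}_p$ (this is precisely the paper's notion of formally $p$-adic; the ``rank'' in the paper's footnote refers to the $p$-rank $ef$, not the rank of the value group, so no special convention is actually being invoked). Two points you should make explicit: $O(P)$ is a domain with fraction field $L$ (the paper records that the torsor is irreducible), which is what makes $O(P)\hookrightarrow O_{P,p}\hookrightarrow \widehat{O}_{P,p}$ injective; and in the $p$-adic forward direction the valuation of $L^+$ restricts to the given one on $K$ because a $p$-adically closed field carries a unique $p$-adic valuation.
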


\section{The proof of Theorem 1.2}
\subsection{Reduction to $K$ is real  (resp., p-adically) closed}
For notational convenience, a differential module $M/K$ is represented by a
linear differential equation $\mathcal{L}(y):=y^{(d)}+a_{d-1}y^{(d-1)}+\cdots +a_1y^{(1)}+a_0y=0$.
A Picard--Vessiot field $L$ for $\mathcal{L}$ has the properties: 
$k$ is the field of constants of $L$, the {\it solution space}
$V=\{v\in L|\ \mathcal{L}(v)=0\}$ is a $k$-linear space of dimension $d$ and $L$ is generated over the field $K$ by $V$ and all the derivatives of the elements in $V$. One writes $L=K<V>$ for this last property.
\begin{lemma} Let $\tilde{K}\supset K$ be an extension of real (resp., p-adic) differential fields such that the field of constants of $\tilde{K}$ is $k$. Suppose that $\tilde{K}\otimes M$ has a real (resp., p-adic) Picard--Vessiot field
$\tilde{L}$, then $M$ has a real (resp., p-adic) Picard--Vessiot field.
\end{lemma}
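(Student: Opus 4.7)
My plan is to avoid any further invocation of the Tannakian machinery and construct the desired Picard--Vessiot field for $M/K$ directly as a subfield of $\tilde{L}$. Since $\tilde{L}$ is a Picard--Vessiot field for $\tilde{K}\otimes M$, it contains an invertible fundamental matrix $F$ with $F'=AF$, where $A$ is a matrix representing $M$ over $K$. I would set $L\subset\tilde{L}$ to be the subfield generated over $K$ by the entries of $F$, with the derivation inherited from $\tilde{L}$. Using $F'=AF$ and the Leibniz rule (the entries of $A$ lie in $K\subset L$), one sees that the entries of all higher derivatives $F^{(n)}$ lie in $L$, so $L$ is stable under differentiation.

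Next I would verify the four Picard--Vessiot axioms for $L/K$. Axioms (a), (b), (c) are immediate from the construction. For axiom (d), the hypothesis that the field of constants of $\tilde{K}$ is $k$ forces the field of constants of $\tilde{L}$ to be $k$ as well (this being axiom (d) for $\tilde{L}/\tilde{K}$); hence the constants of $L\subset\tilde{L}$ all lie in $k$, and the reverse inclusion $k\subset K\subset L$ is clear. Thus $L$ is a Picard--Vessiot field for $M/K$.

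Finally I would argue that $L$ inherits realness (resp., p-adicity) from $\tilde{L}$. The real case is immediate: $-1$ is not a sum of squares in $\tilde{L}$, hence a fortiori not in the subfield $L$. For the p-adic case, if $\tilde{O}\subset\tilde{L}$ is the rank-one valuation ring making $\tilde{L}$ a p-adic field, I would take $O:=\tilde{O}\cap L$ and check that $O$ is a valuation ring of $L$, that its maximal ideal equals $p\tilde{O}\cap L$, that this in turn equals $pO$, and that $O/pO$ embeds in $\tilde{O}/p\tilde{O}=\mathbb{F}_p$ while containing the image of $\mathbb{Z}$, hence equals $\mathbb{F}_p$. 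The one point requiring care is the equality $p\tilde{O}\cap L=pO$: given $a\in p\tilde{O}\cap L$, one must verify that $a/p\in\tilde{O}\cap L=O$, which holds because $a,p\in L$ and $a/p\in\tilde{O}$. All other steps are routine bookkeeping, so the main obstacle, to the extent that there is one, is this p-adic verification.
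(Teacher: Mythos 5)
Your proposal is correct and follows the same route as the paper: the paper simply sets $L=K<V>\subset\tilde{L}$ (the field generated over $K$ by the solution space and its derivatives, i.e., by the entries of a fundamental matrix) and declares it "clearly" a real (resp., p-adic) Picard--Vessiot field. You have merely written out the verifications the paper leaves implicit, including the correct observation that realness and p-adicity pass to subfields.
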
 
\begin{proof} Let $V\subset \tilde{L}$ denote the solution space of  $\tilde{K}\otimes M$. Then the field $L=K<V>\subset \tilde{L}$ is clearly a real
(resp., p-adic) Picard--Vessiot field for $M$. \end{proof}

\begin{lemma} Let $L_1,L_2$ be two real Picard--Vessiot fields for $M$ over the real differential field $K$. Suppose that
 $L_1$ and $L_2$ have total orderings extending a total ordering $\tau$ on $K$. Let $K^r\supset K$ be the real closure of $K$ inducing the total ordering $\tau$. Then:\\

 The fields $L_1,L_2$ induce Picard--Vessiot fields
 $\tilde{L}_1,\tilde{L}_2$ for $K^r\otimes M$ over $K^r$. These fields
 are isomorphic as differential field extensions of $K^r$ if and only if
 $L_1$ and $L_2$ are isomorphic as differential field extensions of $K$.
\end{lemma}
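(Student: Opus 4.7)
To prove the lemma I would proceed in three stages: constructing the $\tilde L_i$, then establishing the two directions of the biconditional. For the construction, I fix for each $i$ a real closure $L_i^r$ of $L_i$ compatible with the given total ordering; the differentiation on $L_i$ extends uniquely to $L_i^r$ since the extension is algebraic in characteristic zero. Inside $L_i^r$ lies a canonical copy of the real closure of $(K,\tau)$, which I identify with $K^r$. Set $\tilde L_i := L_i \cdot K^r \subset L_i^r$. The Picard--Vessiot axioms are checked as follows: $\tilde L_i$ is real, being a subfield of the real closed $L_i^r$; since $L_i = K\langle V_i\rangle$ for the $d$-dimensional $k$-solution space $V_i$ of $M$, we have $\tilde L_i = L_i \cdot K^r = K^r\langle V_i\rangle$, so $V_i$ is a full solution space of $K^r \otimes M$ inside $\tilde L_i$; and the field of constants equals $k$ because $\tilde L_i/L_i$ is algebraic (hence constants of $\tilde L_i$ are algebraic over the constants $k$ of $L_i$), while $k$ is real closed and $\tilde L_i$ is real, so $k$ admits no proper real algebraic extension inside $\tilde L_i$.

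For the ($\Leftarrow$) direction, suppose $\tilde\phi : \tilde L_1 \to \tilde L_2$ is a differential $K^r$-isomorphism. Both $\tilde L_i$ have constants $k$, so the $k$-solution space of $K^r \otimes M$ in $\tilde L_i$ is $d$-dimensional; since $V_i \subset \tilde L_i$ is already a $d$-dimensional $k$-space of solutions, it is that full solution space. As $\tilde\phi$ is $K^r$-linear and commutes with $\partial$, it carries $V_1$ bijectively onto $V_2$. Because $\tilde\phi$ fixes $K \subset K^r$ and $L_i = K\langle V_i\rangle$, I obtain $\tilde\phi(L_1) = K\langle V_2\rangle = L_2$, and the restriction $\tilde\phi|_{L_1}$ is the desired differential $K$-isomorphism $L_1 \to L_2$.

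For the ($\Rightarrow$) direction, given a differential $K$-isomorphism $\phi : L_1 \to L_2$, the natural idea is to extend $\phi$ to real closures and restrict to the compositums with $K^r$. The main obstacle I expect is that $\phi$ need not preserve the chosen orderings: pushing forward the ordering on $L_1$ via $\phi$ gives some ordering $\sigma'$ on $L_2$ extending $\tau$ which may differ from the given $\sigma_2$, so extending $\phi$ produces an iso $\tilde L_1 \to L_2 \cdot K^r \subset (L_2,\sigma')^r$, a differential $K^r$-field \emph{a priori} distinct from $\tilde L_2 \subset (L_2,\sigma_2)^r$. I would bridge this by invoking uniqueness of real Picard--Vessiot fields over the real closed base $K^r$: both $\tilde L_1$ and $\tilde L_2$ are real Picard--Vessiot fields for $K^r \otimes M$ over $K^r$, so by Theorem 1.7 applied to $K^r$ each corresponds to the trivial torsor, whence by Propositions 1.5 and 1.6 they are $K^r$-isomorphic as differential fields. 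This relies on the real closed case of Theorem 1.7, which is established in the paper independently of the present lemma.
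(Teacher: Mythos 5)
Your construction of the $\tilde L_i$ and your proof of the ($\Leftarrow$) direction coincide with the paper's: the paper likewise takes real closures $L_j^r$ compatible with the chosen orderings $\tau_j$, identifies the relative algebraic closure of $K$ in $L_j^r$ with $K^r$ via the unique $K$-isomorphism, sets $\tilde L_j=K^r\langle V_j\rangle$, and deduces $L_1\cong L_2$ from $\psi(V_1)=V_2$. Where you genuinely diverge is the ($\Rightarrow$) direction. The paper simply asserts that $\phi:L_1\to L_2$ extends to an isomorphism $L_1^r\to L_2^r$ of the chosen real closures; as you correctly observe, this is only literally available when $\phi$ carries $\tau_1$ to $\tau_2$, so the paper's one-line argument elides exactly the point you isolate (e.g.\ the automorphism $t\mapsto -t$ of $K(t)$, $t^2=z$, does not extend to a fixed real closure). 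Your repair --- both $\tilde L_1$ and $\tilde L_2$ are real Picard--Vessiot fields over the real closed field $K^r$, hence isomorphic by the unicity theorem over a real closed base --- is sound and not circular, since \S~2.2 proves that unicity directly for real closed base fields without invoking Lemma 2.2; note also that the ($\Rightarrow$) direction is never used in the reduction, so nothing downstream is endangered. The trade-off is that your route uses a much heavier tool than the lemma itself and inverts the paper's order of exposition, whereas the paper's (gappier) argument is meant to be elementary.

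One inaccuracy in your justification of the invoked unicity: Theorem 1.7 together with Propositions 1.5 and 1.6 gives only that $K^r\otimes\omega_{\tilde L_1}$ and $K^r\otimes\omega_{\tilde L_2}$ are both isomorphic to $\rho$, i.e.\ that the two fibre functors become isomorphic after extension of scalars from $k$ to $K^r$. To conclude $\omega_{\tilde L_1}\cong\omega_{\tilde L_2}$ over $k$ --- which is what Proposition 1.5 converts into a differential $K^r$-isomorphism $\tilde L_1\cong\tilde L_2$ --- one also needs the injectivity of $H^1(k,G(\overline{k}))\rightarrow H^1(K^r,G(\overline{K^r}))$, established by the $1$-cocycle descent via the real Nullstellensatz at the end of \S~2.2. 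So you should cite the full unicity statement for real closed base fields rather than only Theorem 1.7 and Propositions 1.5 and 1.6.
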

\begin{proof} Let, for $j=1,2$, $\tau_j$ be a total ordering on $L_j$ inducing $\tau$ on $K$ and
let $L_j^r$ be the real closure of $L_j$ which induces the ordering $\tau_j$. The algebraic closure
$K_j$ of $K$ in $L_j^r$ is real closed. Since $\tau_j$ induces $\tau$, there exists a $K$-linear
isomorphism $\phi _j:K^r\rightarrow K_j$. This isomorphism is unique since the only $K$-linear
automorphism of $K^r$ is the identity. We will identify $K_j$ with $K^r$.

 Let $V_j\subset L_j$ denote the solution space of $M$. Then, for $j=1,2$, the field
 $\tilde{L}_j:=K^r<V_j>\subset L_j^r$ is a real Picard--Vessiot field for $K^r\otimes M$.

   Assume the existence of
 a $K^r$-linear differential isomorphism\linebreak $\psi :K^r<V_1>\rightarrow K^r<V_2>$.
 Clearly $\psi (V_1)=V_2$ and $\psi$ induces therefore a $K$-linear differential isomorphism
 $L_1=K<V_1>\rightarrow L_2=K<V_2>$.

 On the other hand, an isomorphism $\phi : L_1\rightarrow L_2$ (of differential field extensions of $K$) extends
 to an isomorphism $\tilde{\phi}: L_1^r\rightarrow L_2^r$. Clearly $\tilde{\phi}$ maps $\tilde{L}_1$ to $\tilde{L}_2$.
 \end{proof}

{\it The p-adic version of Lemma 2.2 reads as follows}.\\
Let $L_1,L_2$ be two p-adic Picard--Vessiot fields for the differential module $M$ over the p-adic differential field $K$.  Let $L_1^+$
and $L_2^+$ denote $p$-adic closures of $L_1$ and $L_2$ satisfying 
the condition of part (3) of Theorem 1.2. 

 For $j=1,2$, the algebraic closure $K_j$ of $K$ in $L_j^+$ is a $p$-adic closure of $K$, according
to \cite{PR}, Theorem 3.4. Further $K\cap K_1^n=K\cap K_2^n$ for every
integer $n\geq 2$ since $K\cap (L_1^+)^n=K\cap (L_2^+)^n$ holds.
By \cite{PR}, Corollary 3.11, there is a $K$-linear isomorphism 
of the $p$-adic fields $K_1\rightarrow K_2$. Now we identify $K_1$ and
$K_2$ and denote this field by $\tilde{K}$.  Then:

The fields $L_1,L_2$ induce Picard--Vessiot fields  $\tilde{L}_1,\tilde{L}_2$ for $\tilde{K}\otimes M$ over $\tilde{K}$.  As in the proof 
Lemma 2.2, one shows that  $\tilde{L}_1$ and $\tilde{L}_2$
 are isomorphic as differential field extensions of $\tilde{K}$ if and only if
 $L_1$ and $L_2$ are isomorphic as differential field extensions of $K$.\\

{\it We conclude that it suffices to prove
Theorem 1.2 for the case that $K$ is real (resp., p-adically) closed}.

\subsection{Proof of  the unicity}

 \begin{theorem}[{\bf  =1.7}]  Suppose that $K$ is real (resp., p-adically)  closed. Let $L$ be a Picard--Vessiot field for a differential module
 $M/K$.  Then $L$ is a real (resp., p-adic) field if and only if the torsor $\underline{Isom}^\otimes_K(K\otimes \omega_L, \rho)$ is
 trivial. \end{theorem}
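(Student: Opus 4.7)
The plan is to identify $L$ with the function field $K(P)$ of the smooth irreducible affine $K$-variety $P=\underline{Isom}^\otimes_K(K\otimes \omega_L,\rho)$ and to prove the two implications separately, using local geometry for one direction and Serre's descent Theorem~1.1 for the other.

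For ``$P$ trivial $\Rightarrow L$ is real (resp., $p$-adic)'', triviality of the torsor yields a $K$-point $p_0\in P(K)$, which is automatically smooth because $P$ is a torsor under the smooth group $G_K$. Completing the local ring at $p_0$ gives $\widehat{O_{P,p_0}}\cong K[[t_1,\dots,t_d]]$ and hence an injection $L\hookrightarrow \mathrm{Frac}(K[[t_1,\dots,t_d]])$. In the real case the latter field inherits a total ordering extending that of $K$ by declaring each $t_i$ to be infinitesimal and positive in lexicographic order, so $L$ is real. A more uniform formulation, which also handles the p-adic case, exploits that a smooth $K$-variety with a $K$-point has Zariski-dense $K$-rational points (by a real, resp., $p$-adic, implicit function theorem): any relation $-1=\sum f_i^2$ in $L$ would specialize at a common $K$-point of definition of the $f_i$ to an identity contradicting the reality of $K$, and in the p-adic case one replaces sums of squares by the Kochen operator and uses the Prestel--Roquette characterization of formally $p$-adic fields.

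For the converse, suppose $L$ is real (resp., p-adic) and let $L^*$ denote a real closure of $L$ (resp., the given p-adic closure), which is a real (resp., p-adically) closed field containing $K$. The inclusion $L=K(P)\hookrightarrow L^*$ corresponds to a generic $L^*$-point of $P$, so $P(L^*)\neq\emptyset$ and $[P]=1$ in $H^1(L^*,G(\overline{L^*}))$. Applied to the extension $K\subset L^*$, Theorem~1.1 is available: condition~(i) holds by Artin--Lang/Tarski--Seidenberg transfer (resp., by Prestel--Roquette), and condition~(ii) because the absolute Galois groups of two real (resp., p-adically) closed fields are canonically matched under such an extension. Theorem~1.1 then provides a bijection $H^1(K,G(\overline K))\xrightarrow{\sim}H^1(L^*,G(\overline{L^*}))$, forcing $[P]=1$ already over $K$; hence $P$ is trivial.

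The main technical difficulty is in the first direction for the p-adic case, since the formal completion $\mathrm{Frac}(K[[t_1,\dots,t_d]])$ does not carry an obvious rank-one p-adic valuation; the Zariski density of $P(K)$ combined with the Prestel--Roquette algebraic characterization of formal p-adicity is the cleanest route around this obstacle. In the converse direction the assumption that $K$ is itself closed of the appropriate type is precisely what makes Theorem~1.1 applicable to the extension $K\subset L^*$, which is why this hypothesis appears in the statement.
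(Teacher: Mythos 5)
Your proposal is correct, but for one of the two implications it takes a genuinely different route from the paper. For ``torsor trivial $\Rightarrow$ $L$ real (resp.\ p-adic)'' you and the paper do essentially the same thing: the paper simply cites \cite{PR}, \S 1, Theorem 2 (a variety over a real closed field with a $K$-point, necessarily smooth here since $P$ is a torsor under a smooth group, has real function field), while you sketch the proof of that citation via Zariski density of $K$-points and specialization of a putative relation $-1=\sum f_i^2$ (resp.\ the Kochen operator); that is fine, and your remark that the naive completion argument does not directly produce a rank-one p-adic valuation is a fair reason to prefer the density argument. For the converse, however, the paper is much more direct: since $L$ is real, the coordinate ring $R$ of the torsor is a finitely generated \emph{real} $K$-algebra over the real closed field $K$, so the real Nullstellensatz (Artin--Lang) immediately yields a $K$-algebra homomorphism $R\to K$, i.e.\ a $K$-point. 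You instead pass to a real (resp.\ p-adic) closure $L^{*}$ of $L$, observe that the generic point gives $P(L^{*})\neq\emptyset$, and invoke the injectivity of $H^1(K,G_K)\to H^1(L^{*},G_{L^{*}})$ from Theorem~1.1 applied to the pair $K\subset L^{*}$. This works --- conditions (i) and (ii) do hold for an extension of real closed (resp.\ p-adically closed) fields, and there is no circularity since the paper's proof of Theorem~1.1 does not use Theorem~1.7 --- but it is heavier than necessary: you only need injectivity, whose proof in \S 2.2 is itself exactly the Artin--Lang specialization argument (find a finitely generated real $k$-subalgebra and a $k$-point of it) that the paper applies directly to $R$. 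So your detour through $L^{*}$ and Galois cohomology buys nothing here and quietly re-imports the same Nullstellensatz input; the paper's one-line appeal to it is the cleaner argument, and you should at least note that the full bijectivity (hence Serre's surjectivity proof) is not needed for this step.
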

 \begin{proof}  We write the proof for the real case. The p-adic case is similar.

The coordinate ring of the affine torsor  $\underline{Isom}^\otimes_K(K\otimes \omega_L, \rho)$ is denoted by $R$. We recall that  $L$ is the field of fractions of $R$.\\

If $L$ is a real Picard--Vessiot field, then $R\subset L$ is a finitely generated real $K$-algebra. From the real
Nullstellensatz and the assumption that $K$ is real closed it follows that there exists a $K$-linear homomorphism
$\phi :R\rightarrow K$ with $\phi (1)=1$. The torsor $Spec(R)$ has a $K$-valued point and is therefore trivial.  \\

If the torsor $Spec(R)$ is trivial, then the affine variety $Spec(R)$ has 
a $K$-valued point.  It follows that the Picard--Vessiot field $L$, which is the function field  of this variety, is real (see for instance \cite{PR}, \S 1, Theorem 2). \end{proof}

{\it In proving the unicity}, we restrict to the real case. According to \S 2.1 we
may assume that $K$ is real closed.  Let $L_1,L_2$ denote two real Picard-Vessiot fields for a differential module $M/K$. We will prove
that $L_1$ and $L_2$ are isomorphic as differential extension fields of $K$.
\begin{proof}
 Write $\omega _j=\omega _{L_j}:<M>_\otimes \rightarrow vect(k)$ for the corresponding
fibre functors. Put $G=\underline{Aut}^\otimes_k(\omega _1)$. Then $\underline{Isom}^\otimes _k(\omega _1,\omega _2)$ is a $G$-torsor over $k$ corresponding to an element $\xi \in H^1(k,G(\overline{k}))$.

The $G_K$-torsor  $\underline{Isom}^\otimes _K(K\otimes \omega
_1,K\otimes \omega _2)$ corresponds to an  element
 $\eta \in H^1(K,G(\overline{K}))$. This element is the image of $\xi$ under the natural map from 
$H^1(k,G(\overline{k}))$ to $H^1(K,G(\overline{K}))$,
induced by the inclusion $G(\overline{k})\subset G(\overline{K})$ and the observation that $Gal(\overline{K}/K)=Gal(\overline{k}/k)$).   Since $L_j$ is real, the torsor
$\underline{Isom}^\otimes _K(K\otimes \omega _j,\rho )$ is trivial for $j=1,2$, by Theorem 1.7. Thus there exists
 isomorphisms $\alpha _j :K\otimes \omega _j\rightarrow \rho$ for $j=1,2$. The isomorphism
 $\alpha _2^{-1}\circ \alpha _1:K\otimes \omega _1\rightarrow K\otimes \omega _2$ implies that $\eta$ is trivial.

Let $\xi$ be represented by the 1-cocycle $c$ with values in $G(\overline{k})$. Since its image
in $H^1(K,G(\overline{K}))$ is trivial, there is an element $h\in G(\overline{K})$ such that $c(\alpha )=h^{-1}\alpha (h)$ for all $\alpha \in Gal(\overline{K}/K)=Gal(\overline{k}/k)$.

 There exists a finitely generated $k$-algebra $B\subset K$ with 
$h\in G(\overline{k}B)$. Since $B$ is real and $k$ is real closed, there exists a 
$k$-linear homomorphism $\phi :B\rightarrow k$ with $\phi (1)=1$
(\cite{PR,La}). Further $\phi$ extends
to a $\overline{k}$-linear homomorphism $\overline{k}B\rightarrow \overline{k}$, commuting with the actions
of $Gal(\overline{K}/K)=Gal(\overline{k}/k)$. 
  Applying $\phi$ to the identity $c(\alpha)=h^{-1}\alpha(h)$ one obtains
 $c(\alpha )=\phi (h)^{-1}\alpha (\phi (h))$.
  Thus $c$ is a trivial 1-cocycle and there is an isomorphism $\omega _1\rightarrow \omega _2$. Hence  $L_1$ and $L_2$ are isomorphic as differential field extensions of $K$.\end{proof}

\subsection{Proof of Theorem 1.1}
Let $k\subset K$ satisfy (i) and (ii). The final part of \S 2.2 proves the injectivity of $H^1(k, G(\overline{k}))\rightarrow H^1(K, G(\overline{K}))$. For notational convenience we write this as $H^1(k, G)\rightarrow H^1(K, G)$.
 {\it Here we reproduce J.-P.~Serre's proof of the surjectivity of this map,
which he communicated to us by an email message on 04-07-2013}.
\begin{proof}  
(1). Let $U$ be the unipotent radical of  $G$.  The map  $H^1(k,G) \rightarrow H^1(k,G/U)$ is bijective. One can find the easy proof of this in 
lemma 7.3 of \cite{G-MB}. 
 Since  $H^1(K,G) \rightarrow H^1(K,G/U)$ is also bijective,  we may
divide by  $U$, i.e. we may assume that the neutral component  $G^o$  of  $G$  is reductive.\\
(2). Consider a commutative group  $C$ over $k$. It is given that the natural map $Gal(\overline{K}/K)\rightarrow Gal(\overline{k}/k)$ is a bijection. Hence there are natural maps  $H^n(k,C) \rightarrow H^n(K,C)$ for all $n$. For every $n > 0$ (we only need $n = 1,2$) these maps are bijective. Indeed, the commutative group  $C(\overline{K})/C(\overline{k})$ is torsion free and divisible and so it has trivial Galois cohomology.\\
(3).  Let  $T$  be a maximal torus  of  $G$, and let $N$  be its normalizer. By a result of T.A.~Springer (lemma 6 of III.4.3 \cite{Se}), the map  $H^1(K,N) \rightarrow H^1(K,G)$  is surjective. Hence it will be enough to prove surjectivity for  $N$.\\
(4). After replacing $G$ by $N$,  we have an exact sequence  $1 \rightarrow C \rightarrow G \rightarrow F \rightarrow 1$, where  $C$  is a torus
and  $F$ a finite group. This gives us a diagram for the  $H^1$:

$$\begin{array}{ccccccc} 1  & \rightarrow & H^1(k,C) & \rightarrow & H^1(k,G) & \rightarrow & H^1(k,F) \\
&& \downarrow && \downarrow && \downarrow \\  1  & \rightarrow &   H^1(K,C)  & \rightarrow & H^1(K,G)  & \rightarrow & H^1(K,F)\end{array}
$$
The map $H^1(k,F)\rightarrow H^1(K,F)$ is bijective since $F$ is finite and
$Gal(\overline{K}/K)=Gal(\overline{k}/k)$ and we identify the two sets.
 Let  $x$  be an element of  $H^1(K,G)$  and let  $y$  be its image in  $H^1(K,F)$. 
 Thus we view   $y$   as an element of $H^1(k,F)$.

\noindent {\bf Claim.} The element  $y$  belongs to the image of  $H^1(k,G) \rightarrow H^1(k,F)$.

\noindent {\it Proof of the claim}. We use prop.41 of I.5.6 \cite{Se}. It tells us that  $y$  belongs to that image if and only if its coboundary $\Delta(k,y)$ 
is  $0$; this coboundary belongs to  $H^2(k,C_y)$, where  $C_y$ is the Galois twist of  $C$  defined by a cocycle of the class  $y$.
This argument also applies over  $K$, so that we have a class  $\Delta(K,y)$ in  $H^2(K,C_y)$, and it is clear that  $\Delta(K,y)$
is the image of $\Delta(k,y)$ under the map  $H^2(k,C_y) \rightarrow H^2(K,C_y)$. But, since $y$  comes from  $x$, we have
$\Delta(K,y) = 0$, hence $\Delta(k,y) = 0$, by  (2)  above, applied to the commutative group  $C_y$.\\
(5).  End of proof. By a little diagram chasing,  the Claim and (2), one shows that  $x$  belongs to the image of  $H^1(k,G)$. \end{proof}

\subsection{Proof of the existence}
We present the proof for the real case. The p-adic case is similar.
One considers a differential module $M$ over a real closed differential field $K$.
 We fix a fibre functor $\omega _0:<M>_\otimes \rightarrow vect(k)$ and write $G_0:=Aut^\otimes (\omega _0)$.
Further $G_\rho:=Aut^\otimes (\rho )$, where $\rho :<M>_\otimes \rightarrow vect(K)$ is the forgetful functor.\\
We recall from Proposition 1.5, that $H^1(k,G_0(\overline{k}))$ can be identified
with the set of the fibre functors $\omega :<M>_\otimes \rightarrow vect(k)$ and that 
 $ H^1(K,G_\rho(\overline{K}))$ can be identified with the set of the right $G_\rho$-torsors.\\

Thus the map $\omega \mapsto \underline{Isom}(K\otimes \omega ,\rho)$
from fibre functors to right $G_\rho$-torsors can be seen as a map 
$H^1(k,G_0(\overline{k}))
\rightarrow H^1(K,G_\rho(\overline{K}))$. We want to show that the trivial element of
$ H^1(K,G_\rho(\overline{K}))$ is in the image, because this means that some 
fibre functor $\omega$ yields a trivial torsor, or translated, $L_\omega$ is a real Picard--Vessiot field.
The above map factors as 
\[H^1(k,G_0(\overline{k}))\stackrel{natG_0}{\rightarrow} H^1(K,G_0(\overline{K}))
\stackrel{composition}{\rightarrow}H^1(K,G_\rho(\overline{K})).\]
Here $natG_0$ denotes the natural map and the map ``$composition$'' is defined as follows. An element in $ H^1(K,G_0(\overline{K}))$
is a right $K\times _kG_{0}$-torsor. One can compose with $\underline{Isom}^\otimes (K\otimes \omega _0,\rho)$ which is a left $K\otimes _kG_0$-torsor and a right $G_\rho$-torsor. The result is a
right $G_\rho$-torsor and thus an element in $H^1(K,G_\rho(\overline{K}))$.
The map ``$composition$'' is clearly bijective.  According to Theorem 1.1, the map
$natG_0$ is bijective. This finishes the proof of  the existence.

\section{Comments and Examples}
        $\ $ \\
\indent  The proof of the unicity and existence of  real (resp., p-adic) Picard--Vessiot fields uses almost exclusively properties of Tannakian
categories and Galois cohomology of linear algebraic groups. This implies that the proof remains valid for other types of equations, such as:\\
(a). linear  partial  differential equations, like $\frac{\partial}{\partial z_j}Y=A_jY$ for $j=1,\dots ,n$,\\
(b). linear ordinary difference equations, like $Y(z+1)=AY(z)$, \\
(c). linear $q$-difference equations, like $Y(qz)=AY(z)$, with $q\in \mathbb{R}^*$ (resp., $q\in \mathbb{Q}_p^*$).\\

\begin{observations} From Picard--Vessiot fields to differential Galois groups. \\ {\rm   
Let $K$ be a real closed differential field with field of constants $k$,  $M/K$ a differential module and $\omega :<M>_\otimes \rightarrow vect(k)$ a fibre functor. Let $L$ be the Picard--Vessiot field
corresponding to $\omega$ and $G$ the group of the differential automorphisms of $L/K$. Let
$H$ be the differential Galois group of $K(i)\otimes M$ over $K(i)$. We recall that $G$ is a form of $H$ over the field $k(i)$. Using the
identification $k(i)\times _kG=H$, one obtains on $H$ and on $Aut(H)$ a structure of algebraic group over $k$. Let $\{1,\sigma\}$ be the Galois group of $k(i)/k$. Then $H^1(\{1,\sigma \}, Aut(H))$ has a natural bijection to the set of forms of $H$ over $k$. Although the action of
$\sigma$ on $Aut(H)$ depends on $G$, this set does not depend on the choice of $G$.

Let $\eta: <M>_\otimes \rightarrow vect(k)$ be another fibre functor. Then $\eta$ is mapped, according to Proposition 1.5, to an element  $\xi (\eta) \in H^1(\{1,\sigma \}, G(k(i)))$ (and this induces a bijection between the set of the classes of $\eta$'s and this cohomology set). A 1-cocycle $c$ for the group $\{1,\sigma\}$ has the form $c(1)=1,\ c(\sigma )=a$ and
$a$ should satisfy $a\cdot \sigma (a)=1$ (and is thus determined by $a$).

 A 1-cocycle for
$\xi (\eta)$ can be made as follows. The fibre functor $\eta$ corresponds
to  a Picard--Vessiot field $L_\eta$. Both $L(i)$ and $L_\eta (i)$ are
Picard--Vessiot fields for $K(i)\otimes M$ over $K(i)$. Thus there 
exists a $K(i)$-linear differential isomorphism $\phi :L(i)\rightarrow 
L_\eta (i)$. On the field $L(i)$ we write $\tau$ for the conjugation given
by $\tau(i)=-i$ and $\tau$ is the identity on $L$. The similar conjugation
on $L_\eta(i)$ is denoted by $\tau_\eta$.
 Now $\tau_\eta \circ \phi\circ \tau :L(i)\rightarrow L_\eta (i)$ is another
$K(i)$-linear differential isomorphism. A 1-cocycle $c$ for $\xi (\eta)$
is now  $c(\sigma )=\phi ^{-1} \circ \tau_\eta \circ \phi\circ \tau $.\\

Let $G_\eta$ denote the group of the $K$-linear differential automorphisms of $L_\eta$. The group $G_\eta$ is a form of $G$ and
produces an element in $H^1(\{1,\sigma \},Aut(H))$ with $H=k(i)\times G$. We want to compute a 1-cocycle $C$ for this element. 
Define the isomorphism $\psi: k(i)\times G\rightarrow k(i)\times G_\eta$
of algebraic groups over $k(i)$, by $\psi (g)=\phi \circ g\circ \phi ^{-1}$.  
Define $\tau_G$, the `conjugation' on $k(i)\times G$, by the formula
$\tau_G(g)=\tau \circ g\circ \tau$ for the elements $g\in G(k(i))$.
Let $\tau_{G_\eta}$ be the similar conjugation on $k(i)\times G_\eta$.   Now $\tau_{G_{\eta}}\circ \psi \circ \tau_G:k(i)\times G\rightarrow
k(i)\times G_\eta$  is another isomorphism between the algebraic groups over $k(i)$. The 1-cocycle $C$ is given by
 $C(\sigma )=\psi ^{-1}\circ \tau_{G_{\eta}}\circ \psi \circ \tau_G$.
One observes that $C(\sigma )(g)=c(\sigma )gc(\sigma )^{-1}$.

The map, which associates to
$h\in G(k(i))$, the automorphism $g\mapsto hgh^{-1}$ of $G$, induces a map $H^1(\{1,\sigma\},G(k(i)))\rightarrow H^1(\{1,\sigma\},G/Z(G)(k(i)))
\rightarrow H^1(\{1,\sigma \}, Aut(H))$, denoted by 
$\xi(\eta ) \mapsto \tilde{\xi}(\eta)$. 
The forms corresponding to elements in the image of $H^1(\{1,\sigma\},G/Z(G)(k(i))) \rightarrow H^1(\{1,\sigma \}, Aut(H))$ are called `inner forms of $G$'. 
By \S1,  $\eta$ induces a Picard--Vessiot field and a form $G(\eta )$ of $H$.  Above we have verified (see  \cite{B} for a similar computation) that $G(\eta )$ is the inner form of $G$ corresponding to the element $\tilde{\xi}(\eta)$.  
For the  delicate theory of forms we refer to the informal manuscript
\cite{B} and the standard text \cite{Sp}. }\hfill $\square$ 
\end{observations}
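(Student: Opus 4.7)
The goal is to establish that the form $G(\eta)$ of $G$ arising from a fibre functor $\eta$ is precisely the inner form classified by $\tilde{\xi}(\eta) \in H^1(\{1,\sigma\}, \mathrm{Aut}(H))$. My plan is to produce explicit 1-cocycle representatives for both $\xi(\eta)$ and for the class of $G(\eta)$, and then verify by direct computation that the second is the image of the first under the inner-automorphism map $G(k(i)) \to \mathrm{Aut}(H)(k(i))$.

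First I would invoke the classical Picard--Vessiot theory over $K(i)$, whose field of constants $k(i)$ is algebraically closed, to fix a $K(i)$-linear differential isomorphism $\phi : L(i) \to L_\eta(i)$. Together with the conjugations $\tau$ on $L(i)$ and $\tau_\eta$ on $L_\eta(i)$ (each fixing $L$, resp.\ $L_\eta$, pointwise and sending $i \mapsto -i$), the composition $\tau_\eta \circ \phi \circ \tau$ is a second $K(i)$-linear differential isomorphism $L(i)\to L_\eta(i)$, so
\[
c(\sigma) := \phi^{-1} \circ \tau_\eta \circ \phi \circ \tau
\]
lies in $G(k(i)) = \mathrm{Aut}_{K(i)}^{\partial}(L(i))$. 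Since $\tau^2 = \tau_\eta^2 = 1$, a short calculation confirms the cocycle identity $c(\sigma)\sigma(c(\sigma)) = 1$, producing a concrete representative of $\xi(\eta)$.

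Next I would transport $\phi$ to an isomorphism of $k(i)$-algebraic groups $\psi : k(i) \times_k G \to k(i) \times_k G_\eta$ via $\psi(g) = \phi \circ g \circ \phi^{-1}$ and form the 1-cocycle $C(\sigma) := \psi^{-1} \circ \tau_{G_\eta} \circ \psi \circ \tau_G$ classifying $G(\eta)$ as a form of $G$, where $\tau_G(g) := \tau \circ g \circ \tau$ and $\tau_{G_\eta}$ is defined analogously. The key step is to expand this composition on an element $g \in G(k(i))$: the occurrences of $\tau_\eta \circ \phi \circ \tau$ and its inverse regroup into $c(\sigma)$ and $c(\sigma)^{-1}$, yielding
\[
C(\sigma)(g) = c(\sigma)\, g\, c(\sigma)^{-1}.
\]
This identity is precisely the statement that $C$ is the image of $c$ under the inner-automorphism map $G(k(i)) \to \mathrm{Aut}(H)(k(i))$, and hence at the level of cohomology $[C] = \tilde{\xi}(\eta)$ in $H^1(\{1,\sigma\}, \mathrm{Aut}(H))$, as claimed.

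The main subtlety I anticipate is the bookkeeping of how the conjugations $\tau$, $\tau_\eta$ interact with elements of $G(k(i))$ and $G_\eta(k(i))$: one needs that $\tau \circ g \circ \tau$ really equals $\tau_G(g)$ as an element of $G(k(i))$, and similarly on the $G_\eta$ side. This uses that any $g \in G(k(i))$ is $K(i)$-linear and commutes with $\partial$, so conjugation by $\tau$ corresponds exactly to applying $\sigma$ to the $k(i)$-points of $G$. Another point worth checking carefully is that $\phi$ is well-defined only up to right multiplication by an element of $G(k(i))$, but this ambiguity alters $c(\sigma)$ by a coboundary and so does not affect the cohomology class $\xi(\eta)$ or its image $\tilde{\xi}(\eta)$. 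Once these points are settled, the remaining algebra is mechanical.
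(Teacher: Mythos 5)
Your proposal follows essentially the same route as the paper: it constructs the explicit 1-cocycle $c(\sigma)=\phi^{-1}\circ\tau_\eta\circ\phi\circ\tau$ from a $K(i)$-linear differential isomorphism $\phi$, transports $\phi$ to the group isomorphism $\psi$, and verifies $C(\sigma)(g)=c(\sigma)\,g\,c(\sigma)^{-1}$, which is exactly the computation the Observations carry out. Your added remarks on the coboundary ambiguity of $\phi$ and on identifying $\tau\circ g\circ\tau$ with the Galois action on $G(k(i))$ are correct and make explicit points the paper leaves implicit.
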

\begin{examples} Picard--Vessiot fields and their groups of differential automorphisms.  {\rm
 {\it We continue with the notation and assumptions  of Observations} (3.1). \\ (1). Let $M/K, \omega, L,G$ be such that $G={\rm SL}(n)_{k}$. Since 
$H^1(\{1,\sigma \}, {\rm SL}(n)(k(i)))$ is trivial, $L$ is the unique Picard--Vessiot field and is a real field (because a real Picard--Vessiot field exists). 

The group ${\rm SL}(n)$ has non trivial forms. For instance, 
${\rm SU}(2)$ is an inner form of ${\rm SL}(2)_{\mathbb{R}}$. There are
examples, according to Proposition 3.2, of differential modules $M/K$ 
having a real Picard--Vessiot field $L$ with group of differential automorphisms of $L/K$ equal to ${\rm SU}(2)$.

From \cite{Sp}, 12.3.7 and 12.3.9 one concludes that 
$H^1(\{1,\sigma \},{\rm SU}(2)(\mathbb{C}))$ is trivial.  Again $L$ is the
only Picard--Vessiot field.\\
(2). If $G$ is the symplectic group ${\rm Sp}(2n)_{k}$, then  $H^1(\{1,\sigma \},G(k(i)))$ is trivial. Therefore there is only one Picard--Vessiot field $L$ and this is a real field.  \\ 
(3). Consider a $k$-form $G$ of ${\rm SO}(n)_{k}$ with odd $n\geq 3$. The center $Z$ of $G$ consists of the scalar matrices of order $n$, thus $Z$ is the group $\mu_{n,k}$ of the $n$th roots of unity. Since $n$ is odd, one has $Z(k)=\{1\}$. Further, again since $n$ is odd, the automorphisms of $H=G_{k(i)}$ are inner and $Aut(H)(k(i))=G/Z(k(i))$. We claim the following. \\

\noindent 
{\it The natural map $H^1(\{1,\sigma \},G(k(i)) )\rightarrow
H^1(\{1,\sigma \},G/Z(k(i)) )$ is a bijection}.\\

\noindent 
{\it Proof}. A 1-cocycle $c$ for $G/Z(k(i)) $ is given by $c(1)=1$ and
$c(\sigma )=a\in G/Z(k(i)) $ with $a\sigma (a)=1$. Choose an
$A\in G(k(i))$ which maps to $a$. Thus $A\sigma (A)\in Z(k(i))$ and
$A$ commutes with $\sigma A$. Further $\sigma (A\sigma (A))=
\sigma (A)A=A\sigma (A)$ and thus $A\sigma (A)\in \mu _n(k)=\{1\}$.
Therefore $C$ defined by $C(1)=1,\ C(\sigma )=A$ is a 1-cocycle for
$G(k(i))$ and maps to $c$. Hence the map is surjective.

Consider for $j=1,2$, the 1-cocycle $C_j$ for $G$ given by 
$C_j(\sigma )=A_j$. Suppose that the images of $C_j$ as 1-cocycles for $G/Z(k(i))$ are equivalent. Then there exists  $B\in G(k(i))$ such that $B^{-1}A_1\sigma (B)=xA_2$ for some
element $x\in Z(k(i))$. We may replace $B$ by $yB$ with $y\in Z(k(i))$.
Then $x$ is changed into $xy^{-1}\sigma (y)$. And the latter is equal
to 1 for a suitable $y$. This proves the injectivity of the map.\hfill $\square$\\

We conclude from the above result  that there exists a (unique up to isomorphism) fibre functor $\eta :<M>_\otimes \rightarrow vect(k)$
(or, equivalently, a Picard--Vessiot field) for every form of $H=SO(n)_{k(i)}$ over $k$. Moreover, only one of these
fibre functors  corresponds to a {\it real} Picard--Vessiot field.

\bigskip

Let $\omega : <M>_\otimes \rightarrow vect(k)$ denote the fibre functor
corresponding to a {\it real} Picard--Vessiot field $L_\omega$ and 
$G_\omega$ the group of the differential automorphisms of $L_\omega/K$. 
{\it We want to  identify this form $G_\omega$ of 
$H:={\rm SO}(n)_{k(i)}$}.\\

Since the differential Galois group of $K(i)\otimes M$ is ${\rm SO}(n)_{k(i)}$, there exists an element $F\in sym^2(K(i)\otimes M^*)$ with
$\partial F=0$. Further $F$ is unique up to multiplication by a scalar and
$F$ is a non degenerate bilinear symmetric form. The non trivial automorphism $\sigma$ of $K(i)/K$ and of $k(i)/k$ acts in an obvious way  on $K(i)\otimes M$ and  on constructions by linear algebra
of  $K(i)\otimes M$. Now $\sigma (F)$ has the same properties as $F$ and
thus $\sigma (F)=cF$ for some $c\in K(i)$. After changing $F$ into 
$aF$ for a suitable $a\in K(i)$, we may suppose that $\sigma (F)=F$. 
Then $F$ belongs to $sym^2(M^*)$ and is a non degenerate form of degree $n$ over the field $K$. Further $F$ is determined by its signature because $K$ is real closed. Moreover $KF$ is the unique 
1-dimensional submodule of $sym^2(M^*)$. We claim the following:\\
  {\it
$G_\omega$ is the special orthogonal group over $k$ corresponding to
a form $f$ over $k$ which has the same signature as $F$}. \\

Let $V=\omega (M)$. The group 
$G_\omega$ is the special
orthogonal group of some non degenerate bilinear symmetric form 
$f\in sym^2(V^*)$. Since $L_\omega$ is real, there exists an  isomorphism
$m:K\otimes _k\omega \rightarrow \rho$ of functors. Applying $m$
to the modules $M$ and  $sym^2(M^*)$ one finds an isomorphism 
$m_1:K\otimes _kV\rightarrow M$ of $K$-vector spaces 
which induces an isomorphism of $K$-vector spaces 
$m_2:K\otimes_k sym^2(V^*)\rightarrow sym^2(M^*)$.
The latter maps the subobject $K\otimes kf$ to $KF$ by the uniqueness of
$KF$.   One concludes that the forms $f$ and $F$ have the same signature.\hfill $\square$
}\end{examples}

\begin{proposition} Suppose that $K$ is real closed.
 Given is a connected semi-simple group $H$ over $k(i)$ and a form $G$ of $H$ over $k$. Then there exists a differential module $M$ over $K$ and a real Picard--Vessiot field for $M/K$ such that the group of the differential automorphisms of $L/K$ is $G$.\end{proposition}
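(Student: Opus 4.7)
The plan is first to realize $H$ as the differential Galois group of a module $N$ over $K(i)$ (whose field of constants is the algebraically closed field $k(i)$), and then to descend $N$ to a differential module $M$ over $K$ whose real Picard--Vessiot field has differential Galois group equal to the given form $G$.

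For the first step, I would invoke the known solution to the inverse problem of differential Galois theory over $K(i)$: since $K(i)$ is a differential field whose field of constants $k(i)$ is algebraically closed and $K(i)\neq k(i)$ (because $K\neq k$ by hypothesis), any connected semi-simple group, in particular $H$, is known to be the differential Galois group of some differential module $N$ over $K(i)$, with Picard--Vessiot field $\tilde L\supset K(i)$. By the Tannakian formalism of \cite{D-M,De} and Proposition 1.6, $<N>_\otimes$ is equivalent, via the fibre functor $\omega_{\tilde L}$, to $Rep_{k(i)}(H)$.

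For the descent, I would use that the form $G$ is classified by a $1$-cocycle $c\in Z^1(\{1,\sigma\},Aut(H)(k(i)))$, where $\sigma$ generates $Gal(k(i)/k)=Gal(K(i)/K)$. I would twist the natural $\sigma$-semilinear action on $N$ (coming from $K(i)/K$) by composing with the Tannakian automorphism corresponding to $c(\sigma)$, and use Galois descent to produce a differential module $M$ over $K$ with $K(i)\otimes_K M\cong N$. By Theorem 1.2(1), $M$ admits a real Picard--Vessiot field $L$; and since the Galois group of $K(i)\otimes_K M$ over $K(i)$ is $H$, the group of $K$-linear differential automorphisms of $L$ is automatically a form of $H$ over $k$. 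Finally, using the explicit cocycle computation in Observations 3.1, I would verify that this form is precisely $G$: the twist introduced by $c$ yields exactly the class classifying $G$.

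The main obstacle I expect is the descent when $G$ is an outer form of $H$. In that case the cocycle $c$ involves automorphisms of $H$ which are not inner, and these must be lifted to $\sigma$-semilinear automorphisms of $N$ compatible with the differential structure on $K(i)$. This requires either choosing the realization $N$ from the first step sufficiently symmetrically (so that $<N>_\otimes$ carries the desired outer action of $H$), or equivalently constructing directly a Tannakian category over $k$ which is a $k$-form of $Rep_{k(i)}(H)$ and then realizing it as $<M>_\otimes$ for some differential module $M$ over $K$ via Proposition 1.5.
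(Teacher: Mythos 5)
Your first step (realizing $H$ over $K(i)$ via the solution of the inverse problem for connected semi\-/simple groups) is in the spirit of what the paper does, but the descent step, which is the heart of the matter, does not work as described, and the paper avoids it entirely.

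The gap is this: a $1$-cocycle $c$ with values in $Aut(H)(k(i))$ does not give a descent datum on the differential module $N$. An automorphism of $H$ induces an auto-equivalence of $Rep_{k(i)}(H)\simeq\, <N>_\otimes$, and this auto-equivalence need not fix the object $N$ (for an outer automorphism of ${\rm SL}_n$ it sends the standard representation to its dual, say), so ``composing the $\sigma$-semilinear action with $c(\sigma)$'' is not an operation on $N$. More damagingly, even if you restrict to honest descent data, the $K$-forms of a fixed $N$ are classified by $H^1(\{1,\sigma\},Aut_\partial(N))$, where $Aut_\partial(N)$ is the group of differential automorphisms of $N$ over $K(i)$; when $N$ is irreducible this is $k(i)^*$ and the $H^1$ vanishes by Hilbert 90, so all $K$-forms of a single $N$ are isomorphic and carry the same automorphism group. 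The modules realizing the various forms $G$ of $H$ are \emph{not} $K$-forms of one another's base change to $K(i)$: they are genuinely different modules over $K(i)$ that merely share the Galois group $H$. Finally, even granting a descended $M$, the identification of which form of $H$ acts on the \emph{real} Picard--Vessiot field is a separate, nontrivial computation (see the signature argument for ${\rm SO}(n)$ in Examples 3.2(3)); Observations 3.1 computes the forms attached to varying the fibre functor for a fixed $M$, which only produces inner twists of one fixed group, so it cannot by itself certify that you land on $G$.

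The paper's proof sidesteps all of this by working with the given $k$-form $G$ from the start: embed $G\subset {\rm GL}_{n,k}$ with defining ideal $I$, choose a sufficiently general $A\in Lie(G)(K)$ (a Chevalley module for $H$ guarantees, as in \cite{PS} \S 11.7, that the differential Galois group over $K(i)$ is all of $H$ and not a proper subgroup), and set $M: y'=Ay$. Then the ideal generated by $I$ in $R_0=K[\{X_{k,l}\},\frac{1}{\det}]$ is a maximal differential ideal, so the Picard--Vessiot ring is literally $K[G]$, i.e.\ the \emph{trivial} $G$-torsor; its fraction field is therefore real and its group of differential automorphisms is $G$ by construction. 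If you want to repair your argument you should replace the descent of $N$ by this direct construction of the equation inside $Lie(G)(K)$.
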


 \begin{proof} Let $G$ be given as a subgroup of some ${\rm GL}_{n,k}$, defined by
 a radical ideal $I$. Then $k[G]=k[\{X_{k,l}\}_{k,l=1}^n,\frac{1}{\det }]/I$. The tangent space of
 $G$ at $1\in G$ can be identified with the $k$-linear derivations $D$ of this algebra, commuting with the action of $G$.  These derivations $D$ have the form $(DX_{k,l})=B\cdot (X_{k,l})$ for some
matrix $B\in Lie(G)(k)$ (where $Lie(G)\subset {\rm Matr}(n,k)$ is the Lie algebra of $G$).

The same holds for $K[G]=K\otimes _k k[\{X_{k,l}\}_{k,l=1}^n,\frac{1}{\det }]/I$. Any
$K$-linear derivation $D$ on the algebra, commuting with the action of $G$, has the form
$(DX_{k,l})=A\cdot (X_{k,l})$ with $A\in Lie(G)(K)$. We choose $A$ as general as possible.

The differential module $M/K$ is defined by the matrix equation
$y'=Ay$. It follows from \cite{PS}, Proposition 1.3.1 that the differential Galois group of $K(i)\otimes M$ is contained in $H=G_{k(i)}$.  Now one has to choose $A$ such that the differential Galois group (which is connected because $K(i)$ is algebraically closed) is not a proper subgroup of $H$. Since $H$ is semi-simple, there exists a
Chevalley module for $H$. Using this Chevalley module one can produce
a general choice of $A$ such that the differential Galois
group of $y'=Ay$ over $K(i)$ is in fact $G_{k(i)}$ (compare \cite{PS}, \S 11.7 for the details which remain valid in the present situation).  

The usual way to produce a Picard--Vessiot ring for the
equation  $y'=Ay$ is to consider the differential algebra
$R_0:=K[\{X_{k,l}\}_{k,l=1}^n,\frac{1}{\det }]$, with
differentiation defined by $(X'_{k,l})=A\cdot (X_{k,l})$, and to
produce a maximal differential ideal in  $R_0$.  Since 
$A\in Lie(G)(K)\subset Lie(H)(K(i))$, the ideal $J\subset 
R_0[i]$, generated by $I$ is a differential ideal.  It is in fact a maximal 
differential ideal of $R_0[i]$, since the differential Galois group is precisely $H$.
Then $J\cap R_0=IR_0$ is a maximal differential ideal of $R_0$ and 
$K[G]=R=R_0/IR_0$  is a Picard--Vessiot ring for $M$ over $K$.
The field of fractions $L$ of $R$ is real because the $G$-torsor
$Spec(K[G])$ is trivial.   \end{proof}

It seems that, imitating the proofs in \cite{MS}, one can show that
Proposition 3.2  remains valid under the weaker conditions: $K$ is a real differential field and a $C_1$-field and $H$ is connected.

\end{document}